\DeclareMathAlphabet{\pazocal}{OMS}{zplm}{m}{n}
\NewDocumentCommand{\tens}{e{_^}}{%
  \mathbin{\mathop{\otimes}\displaylimits
    \IfValueT{#1}{_{#1}}
    \IfValueT{#2}{^{#2}}
  }%
}
\newcommand{\F}{\mathbb{F}_p}
\newcommand{\lie}{\mathcal{L}}
\newcommand{\Lie}{\mathrm{Lie}^s}
\newcommand{\free}{\mathrm{Free}}
\newcommand{\R}{\bar{\pazocal{R}}}
\newcommand{\B}{\mathrm{Bar}_{\bullet}}
\newcommand{\g}{\mathfrak{g}}
\newcommand{\Q}{\bar{Q}}
\newcommand{\Mod}{\mathrm{Mod}}
\newcommand{\CE}{\mathrm{CE}}
\newcommand{\id}{\mathrm{id}}
\newcommand{\wt}{\mathrm{wt}}
\newcommand{\Z}{\mathbb{Z}}
\newtheorem{theorem}{Theorem}[section]
\newtheorem{proposition}[theorem]{Proposition}
\theoremstyle{definition}
\newtheorem{definition}[theorem]{Definition}
\newtheorem{remark}[theorem]{Remark}
\newtheorem{notation}[theorem]{Notation}
\title{Mod $p$ homology of unordered configuration spaces of surfaces}
\author{Matthew Chen }
\address{Wayzata High Shcool, Plymouth, MN 55446}
\email{chenmat001@isd284.com}
\author{Adela YiYu Zhang}
\address{Department of Mathematics, Massachusetts Institute of Technology, Cambridge MA 02139}
\email{adelayyz@mit.edu}
\begin{document}

\maketitle
\begin{abstract}
    We provide a short proof that the dimensions of the mod $p$ homology groups of the unordered configuration space $B_k(T)$ of $k$ points in a torus are the same as its Betti numbers for $p>2$ and $k\leq p$. Hence the integral homology has no $p$-power torsion. The same argument works for the punctured genus $g$ surface with $g>0$, thereby recovering a result of Brantner-Hahn-Knudsen via Lubin-Tate theory.
\end{abstract}

\section{Introduction}
The unordered configuration space of $k$ points in a manifold  $M$ is the orbit space $$B_k(M):=\mathrm{Conf}_k(M)_{\Sigma_k}=\{(x_1,\ldots,x_k)\in M^{\times k},x_i\neq x_j\mathrm{\ for\ } i\neq j\}/\Sigma_k.$$ The main new result of this paper concerns the odd primary homology of $B_k(T)$, where $T$ is a closed torus.
\begin{theorem}\label{main}
Let $p$ be an odd prime.
     The dimension of $H_i(B_k(T);\F)$ over $\F$ is given by the $i$th Betti number $\beta_i(B_k(T))$ for all $i$ and $k\leq p$. Hence the integral homology of $B_k(T)$ has no $p$-power torsion for $k\leq p$. 
\end{theorem}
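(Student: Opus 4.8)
The plan is to run everything through the (spectral) Lie-algebra model for the homology of configuration spaces. By Knudsen's theorem there is an equivalence $\bigoplus_{k\ge 0}C_*\big(B_k(T);\F\big)\simeq \CE(\g_T)$, where $\g_T$ is the spectral Lie algebra that $T$ determines -- an object whose homology in each weight is controlled by $V_T$, a degree shift of $H^{-\ast}_c(T;\F)$, a $4$-dimensional graded $\F$-module -- and $\CE$ is the spectral Chevalley--Eilenberg functor, equivalently the $\TQ$-homology, computed by the bar construction $\B$. Under this equivalence the cardinality grading on the left matches the weight grading on the right, so $H_*(B_k(T);\F)\cong H_*\CE(\g_T)_{\wt=k}$. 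Since universal coefficients always gives $\dim_\F H_i(X;\F)\ge\beta_i(X)$, and $H_*(B_k(T);\Z)$ is finitely generated, it suffices to show that $H_*\CE(\g_T)_{\wt=k}$ has total $\F$-dimension $\beta_*(B_k(T))$ for every $k\le p$; matching dimensions in all degrees then forces every $p$-primary torsion subgroup of $H_*(B_k(T);\Z)$ to vanish, which is the second assertion.

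The range $k<p$ is the easy one. The weight-$k$ part of $\CE(\g_T)$ is assembled from the spectral Lie operad terms $\partial_j$ and $\Sigma_j$-homotopy orbits for $j\le k<p$; since $|\Sigma_j|=j!$ is prime to $p$ throughout this range, homotopy orbits agree with strict orbits, so the whole weight-$k$ complex is the base change to $\F$ of the corresponding rational Chevalley--Eilenberg complex. Hence it has the same $\F$-dimension in each degree as its rational analogue, and so does its homology, which by the classical rational computation of $H_*(B_k(T);\mathbb{Q})$ has dimensions $\beta_*(B_k(T))$.

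The case $k=p$ is the crux, and I expect it to be the main obstacle. Now $p\mid|\Sigma_p|$, so the weight-$p$ part of $\CE(\g_T)$ sees the nonzero reduced mod $p$ homology of $\Sigma_p$ acting on $\partial_p\otimes V_T^{\otimes p}$ and acquires extra classes beyond the base change of the rational weight-$p$ complex -- namely the values of the bottom Dyer--Lashof-type operation $\Q$ on the weight-one generators, together with a possible Bockstein, sitting in explicit homological degrees. A priori each of these extra classes could contribute extra $p$-torsion to $H_*(B_p(T))$, and the content of the proof is to rule this out. I would do so by a direct analysis of the small explicit complex $\CE(\g_T)_{\wt=p}$: one checks that each new generator coming from $\Q$ is paired, under the $\CE$ (restricted Lie) differential, with an equally exotic relation forced by the spectral structure, so these classes cancel in homology and leave exactly the rational answer; as a consistency check, they land in homological degree strictly above the homological dimension of $B_p(T)$, which for a surface is at most $p$, so they could not have survived in any case. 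Either way one obtains $\dim_\F H_i(B_p(T);\F)=\beta_i(B_p(T))$ for all $i$, and combining this with the universal-coefficients reduction above finishes the proof.
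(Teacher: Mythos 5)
Your overall framework (Knudsen's bar/Chevalley--Eilenberg model, the universal-coefficient bound $\dim_\F H_i \ge \beta_i$ plus finite generation, and the prime-to-$p$ transfer argument for weights $k<p$) matches the paper's strategy, and the $k<p$ half is essentially the argument given there. But at the step you yourself flag as the crux, weight $k=p$, your proposed mechanism is not what happens, and as stated it would fail. The two extra classes in weight $p$ are $\overline{Q^0}|c\tens x_2$ and $\overline{\beta Q^0}|c\tens x_2$, sitting in \emph{total degrees $0$ and $-1$} (simplicial degree $1$, internal degrees $-1$ and $-2$). So your ``consistency check'' that the exotic classes lie above the homological dimension of $B_p(T)$ is false --- they lie at the bottom, one of them in negative total degree. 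Moreover they do not cancel against each other ``under the CE (restricted Lie) differential,'' nor does the weight-$p$ answer reduce classwise to the rational one: in the actual proof the negative-degree class $\overline{\beta Q^0}|c\tens x_2\in E^2_{1,-2}$ must die because the abutment is the homology of a space, and the only class that can kill it is the divided-power class $\gamma_p(c\tens x_2)\in E^2_{p-1,1-p}$ (the unique total-degree-$0$ class of the rational shape, since $\beta_0(B_p(T))=1$). This forces a \emph{higher} differential $d_{p-2}\colon \gamma_p(c\tens x_2)\mapsto \overline{\beta Q^0}|c\tens x_2$ in the bar spectral sequence, after which the other exotic class $\overline{Q^0}|c\tens x_2$ \emph{survives} and represents the generator of $H_0(B_p(T);\F)$, replacing $\gamma_p(c\tens x_2)$. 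Only the total dimension agrees with the rational count; the dimension bound $\sum_i\dim_\F H_i\ge\sum_i\beta_i$ then rules out all further differentials. Your proposal contains no mechanism for locating this $d_{p-2}$, and the cancellation you posit inside the algebraic complex does not exist there (for $p>3$ the operations $\overline{\beta^\epsilon Q^0}$ commute past the simplicial structure and produce genuine $E^2$-classes, cf.\ the weight-$p$ statement of the $E^2$-computation).

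A secondary omission: the case $p=3$ needs separate treatment, because of the operadic relation $\overline{\beta Q^{j}}(x)=[[x,x],x]$ on degree-$2j$ classes; there the cancellation of $\gamma_3(c\tens x_2)$ against $\overline{\beta Q^0}|c\tens x_2$ already happens as a $d_1$ in the normalized complex rather than as a higher differential, and the dimension count must be run on the modified $E^2$-page. Without identifying the specific differential at weight $p$ (and adjusting for $p=3$), the inequality $\dim_\F H_i\ge\beta_i$ alone cannot close the argument, since the weight-$p$ $E^2$-page is strictly larger than the rational answer; so the gap is genuine rather than expository.
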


The study of unordered configuration spaces dates back to as early as Segal \cite{segal} and McDuff \cite{mcduff}. The rational homology groups of these objects are relatively well understood in cases of interests via classical methods, see for instance \cite{BC}\cite{BCT}\cite{kriz}\cite{totaro}\cite{felix}. In contrast, the odd primary homology groups of unordered configuration spaces have remained mostly intractable. Classically, the only known cases are the following:
when
 $M=\mathbb{R}^n$  for $1\leq n\leq \infty$  where $\bigoplus_{k\geq 0}H_*( B_k(M);\F)$ is the mod $p$ homology of the free $\mathbb E_n$-algebra on  $\mathbb{S}$ \cite{Einfinity} \cite{CLM}\cite{bmms},
and when the dimension of $M$ is odd   where $\bigoplus_{k\geq 0}H_*( B_k(M);\F)$ depends only on the $\F$-module $H_*(M;\F)$ \cite{BCT}\cite{milgram}\cite{BCM}.

More recently, advances in the computation of the homology of unordered configuration spaces are made possible by a result of Knudsen \cite{ben}. For any manifold $M$ and spectrum $X$, we can consider the \textit{labeled} configuration spectrum $$B_k(M;X):=\Sigma^{\infty}_+\mathrm{Conf}_k(M)\tens_{\Sigma_k} X^{\tens k}.$$ In particular $\Sigma^\infty_+B_k(M)=B_k(M;\mathbb{S})$. 
Denote by $s\lie$ the monad associated to the  free spectral Lie algebra functor $\free^{s\lie}$. The $\infty$-category of spectral Lie algebras is cotensored in Spaces, and  $(-)^{M^+}$ denotes the cotensor with the one-point compactification of $M$ in this category. Using the machinery of factorization homology, Knudsen established the following equivalence.
\begin{theorem}\cite[Section 3.4]{ben} \label{knudsen}
Let $M$ be a parallelizable $n$-manifold and $X$ a spectrum of weight one. Then there is an equivalence of weighted spectra
\begin{equation}\label{barconstruction}
    \bigoplus_{k\geq 1} B_k(M;X)  \simeq \mid \B(\mathrm{id}, s\lie, \free^{s\lie}(\Sigma^{n}X) ^{M^+}) \mid.
\end{equation}
 The left hand side is weighted by the index $k$ and right hand side induced by the weight on $X$.
\end{theorem}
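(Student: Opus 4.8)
The plan is to realize both sides as invariants of a free $\mathbb{E}_n$-algebra and to compare them through Koszul duality with the spectral Lie operad; I proceed in three stages. First I would identify the left-hand side with factorization homology. Since $M$ is parallelizable it admits a framing, so the factorization homology $\int_M(-)$ of $\mathbb{E}_n$-algebras is defined (after Ayala--Francis and Lurie), and for the free non-unital $\mathbb{E}_n$-algebra on $X$ the standard computation of factorization homology of free algebras gives
\[
\int_M \free^{\mathbb{E}_n}(X)\;\simeq\;\bigoplus_{k\geq 1}\Sigma^\infty_+\mathrm{Conf}_k(M)\otimes_{\Sigma_k}X^{\otimes k}\;=\;\bigoplus_{k\geq 1}B_k(M;X),
\]
because $\int_M(-)$ preserves sifted colimits and sends the free algebra to the configuration-space decomposition; the non-unital version pins down the range $k\geq 1$, and the weight on $X$ matches the index $k$. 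This stage is essentially formal once factorization homology is in place.

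Next I would pass to the Lie side. The $\mathbb{E}_n$-operad is Koszul dual to the $n$-shifted spectral Lie operad, which yields an $\mathbb{E}_n$-enveloping functor $U_n$ from spectral Lie algebras to $\mathbb{E}_n$-algebras; applied to a free Lie algebra it recovers the free $\mathbb{E}_n$-algebra with the degree shift of the duality,
\[
\free^{\mathbb{E}_n}(X)\;\simeq\;U_n\!\big(\free^{s\lie}(\Sigma^{n}X)\big),
\]
the suspension $\Sigma^{n}$ being precisely the Koszul shift. The decisive input is then the formula
\[
\int_M U_n(\g)\;\simeq\;\big|\B(\id, s\lie, \g^{M^+})\big|\;=\;\TQ\!\big(\g^{M^+}\big),
\]
identifying factorization homology of an enveloping algebra with the derived indecomposables (the monadic two-sided bar construction) of $\g$ cotensored with the one-point compactification $M^+$. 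Granting this and setting $\g=\free^{s\lie}(\Sigma^{n}X)$ produces the asserted equivalence; crucially, the cotensor destroys freeness, so the right-hand bar construction is a genuinely nontrivial invariant rather than collapsing onto its generators.

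The main obstacle is establishing this enveloping-algebra formula, and in particular the appearance of the cotensor with $M^+$. This is where the global topology of $M$ enters, through a Poincar\'e--Lefschetz duality mechanism: factorization homology glues the local $\mathbb{E}_n$-structure along $M$, and under Koszul duality this gluing becomes a cotensor of the dual Lie algebra against $M^+$, which via $\tilde H^*(M^+)\cong H^*_c(M)$ records the compactly supported cochains of $M$ --- the Poincar\'e dual of its chains. Parallelizability is exactly what keeps this dual untwisted, so that one cotensors with $M^+$ itself rather than with a Thom spectrum of $TM$. To prove the formula I would first treat the disk $M=\mathbb{R}^n$, where $\int_{\mathbb{R}^n}(-)=\id$ and $(\mathbb{R}^n)^+=S^n$, checking that $\TQ(\g^{S^n})\simeq U_n(\g)$ encodes precisely the $n$-fold desuspension of the duality; then I would bootstrap to general $M$ by resolving $\g$ through free Lie algebras and using that both $\int_M(-)$ and the bar construction preserve the relevant sifted colimits, together with $\otimes$-excision for factorization homology to propagate the local identification across a disk decomposition of $M$. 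The delicate points throughout are tracking the shift and the framing inside the cotensor, and controlling convergence so that geometric realization commutes with the cotensor by $M^+$.
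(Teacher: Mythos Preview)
The paper does not prove this theorem; it is quoted as a black box from Knudsen's \textit{Higher enveloping algebras} \cite[Section 3.4]{ben}, so there is no in-paper proof to compare against. Your sketch is a faithful outline of Knudsen's actual argument in that reference: the identification of $\bigoplus_k B_k(M;X)$ with $\int_M\free^{\mathbb{E}_n}(X)$ is the standard free-algebra computation in factorization homology, the equivalence $\free^{\mathbb{E}_n}(X)\simeq U_n(\free^{s\lie}(\Sigma^n X))$ is Knudsen's construction of the higher enveloping algebra, and the formula $\int_M U_n(\g)\simeq |\B(\id,s\lie,\g^{M^+})|$ is precisely the main theorem of \cite{ben}. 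Your diagnosis of where the work lies---establishing this last formula, with the cotensor by $M^+$ arising from a Poincar\'e duality mechanism and parallelizability killing the tangential twist---is accurate, and your proposed bootstrap (verify on $\mathbb{R}^n$, then propagate via sifted colimits and $\otimes$-excision) is the shape of Knudsen's argument, though in practice he organizes it somewhat differently through the machinery of factorization homology for Lie algebras. Nothing is wrong here; you have simply supplied what the paper under review deliberately outsourced.
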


Using the bar spectral sequence with rational coefficients associated to the right hand side of (\ref{barconstruction}), Knudsen \cite{knudsen} provided a general formula for the Betti numbers of unordered configuration spaces. Building on Knudsen's work, Drummond-Cole and Knudsen \cite{dck} produced explicit formulae of the Betti numbers of unordered configuration spaces of surfaces.  In \cite{bhk},  Brantner, Hahn, and Knudsen studied Knudsen's spectral sequence  with coefficients in  Morava $E$-theory at an odd prime. They computed the weight $p$ part of the labeled configuration spaces in $\mathbb{R}^n$ and punctured genus $g$ surfaces $\Sigma_{g,1}$ for $ g\geq 1$ with coefficient in a sphere. By letting the height go to infinity, they deduced that:
\begin{theorem}\cite[Theorem 1.10]{bhk}\label{bhktheorem}
Let $p$ be an odd prime.
   The integral homology of $B_p(\Sigma_{g,1})$, $g\geq 1$ has no $p$-power torsion. 
\end{theorem}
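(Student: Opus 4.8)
The plan is to run Knudsen's bar construction with $\F$-coefficients and reduce the statement to a dimension count in weight $p$. By the universal coefficient theorem, $H_*(B_p(\Sigma_{g,1});\Z)$ has no $p$-power torsion exactly when $\dim_{\F}H_i(B_p(\Sigma_{g,1});\F)=\beta_i(B_p(\Sigma_{g,1}))$ for every $i$; since $\dim_{\F}H_i(-;\F)\ge\beta_i$ always holds, it suffices to bound the mod $p$ Betti numbers above by the rational ones, which are known by Knudsen \cite{knudsen} and Drummond-Cole--Knudsen \cite{dck}. The surface $\Sigma_{g,1}$ is an open parallelizable $2$-manifold whose one-point compactification is homotopy equivalent to the closed surface $\Sigma_g$, so Theorem~\ref{knudsen} with $n=2$ and $X=\mathbb S$ gives
\begin{equation*}
\textstyle\bigoplus_{k\ge 1}\Sigma^\infty_+B_k(\Sigma_{g,1})\ \simeq\ \bigl|\B(\id,s\lie,\free^{s\lie}(\Sigma^2\mathbb S)^{(\Sigma_{g,1})^+})\bigr|.
\end{equation*}
Applying $H_*(-;\F)$ and filtering by simplicial degree produces the bar / Chevalley--Eilenberg spectral sequence converging to $\bigoplus_k H_*(B_k(\Sigma_{g,1});\F)$ and respecting the weight grading; only the weight-$p$ summand is relevant here.

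Next I would identify the $E^1$-page in weight $p$. It is assembled from $H_*(s\lie(j);\F)$ for $j\le p$ and from the weight $\le p$ part of $H_*(\free^{s\lie}(\Sigma^2\mathbb S)^{(\Sigma_{g,1})^+};\F)$. The operad homology $H_*(s\lie(j);\F)$ is torsion-free and carries the same $\Sigma_j$-representation as rationally, and every homotopy orbit $(-)_{h\Sigma_j}$ with $j<p$ is $p$-torsion free; hence in weight $p$ the only deviation from the rational picture comes from the $\Sigma_p$-homotopy orbits, where $H_{>0}(\Sigma_p;-)\ne 0$. By the computation of the mod $p$ homology of free spectral Lie algebras (Kjaer for odd $p$, packaged here through the algebra $\R$ of Dyer--Lashof-type operations $\Q^s$), the weight-$p$ part of $H_*(\free^{s\lie}(V);\F)$ is the weight-$p$ part of the free shifted Lie algebra on $H_*(V;\F)$ — which agrees with its rational analogue since $H_*(V)$ is torsion-free — together with the extra classes $\Q^s(v)$ and $\beta\Q^s(v)$ for $v\in H_*(V;\F)$. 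Here $V$ is the weight-$1$ underlying spectrum $\free^{s\lie}(\Sigma^2\mathbb S)^{(\Sigma_{g,1})^+}$, namely $\Sigma^2$ applied to the Spanier--Whitehead dual of $(\Sigma_{g,1})^+$, with $H_*(V;\F)\cong H^{2-*}_c(\Sigma_{g,1};\F)$, concentrated in degrees $0$ and $1$ of ranks $1$ and $2g$. These Dyer--Lashof classes — which appear throughout the weight-$p$ complex wherever a $\Sigma_p$-orbit occurs — are the only source of possible mod $p$ homology in excess of the Betti numbers.

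The heart of the argument, and the step I expect to be the main obstacle, is to show that this Dyer--Lashof contribution is acyclic in the weight-$p$ bar / Chevalley--Eilenberg spectral sequence, so that the mod $p$ and rational homology of the bar construction agree in weight $p$. This is precisely the content of Theorem~\ref{bhktheorem}, and it is what Brantner--Hahn--Knudsen extracted from a Lubin--Tate computation; carrying it out directly with $\F$-coefficients amounts to understanding how the operations $\Q^s$ interact with the bar differential once one cotensors the free Lie algebra on $\Sigma^2\mathbb S$ with $(\Sigma_{g,1})^+$. Concretely I would try to build an explicit contraction (or else locate the differentials) annihilating the $\Q$-classes, exploiting the relations in $\R$ that tie the $\Q^s$ to iterated brackets together with the structure of $H^*_c(\Sigma_{g,1};\F)$ — in particular the $2g$ classes in degree $1$, which are what make the cotensor genuinely nontrivial when $g\ge 1$ (for $g=0$, i.e.\ $M=\mathbb R^2$, the cotensor is a shift of $\mathbb S$ and the torsion-freeness of $B_p(\mathbb R^2)=B\mathrm{Br}_p$ is classical). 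Granting this acyclicity, the weight-$p$ part of $H_*(B_p(\Sigma_{g,1});\F)$ has total dimension equal to the corresponding Betti-number count, so $\dim_{\F}H_i(B_p(\Sigma_{g,1});\F)=\beta_i$ for all $i$ and $H_*(B_p(\Sigma_{g,1});\Z)$ has no $p$-power torsion. The same reasoning, with $T_+$ in place of $(\Sigma_{g,1})^+$, treats the torus.
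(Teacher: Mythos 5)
Your overall framing (reduce via universal coefficients to showing $\dim_{\F}H_i=\beta_i$, use Knudsen's bar construction in weight $p$, and observe that the only deviation from the rational picture comes from $\Sigma_p$-orbits, i.e.\ the spectral Lie Dyer--Lashof classes) is the same as the paper's. But the decisive step is exactly the one you defer: you write ``Granting this acyclicity\dots'', and the theorem is precisely the assertion you are granting. Moreover, the mechanism you propose --- that the Dyer--Lashof contribution is acyclic, to be exhibited by ``an explicit contraction annihilating the $\Q$-classes'' --- is not what actually happens, so an attempt to build such a contraction would fail. By \Cref{oddE2}.(2), in weight $p$ the $E^2$-page is $\wt_p H_{*,*}(\CE(\g_1;\F))$ plus exactly two extra classes, $\overline{Q^0}|c\tens x_2$ (total degree $0$) and $\overline{\beta Q^0}|c\tens x_2$ (total degree $-1$); these do not cancel against each other. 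The paper's argument is a forced-differential argument: since the abutment is the homology of a space, the total-degree $-1$ class must die, and the only available source is the divided-power class $\gamma_p(c\tens x_2)$ in the Chevalley--Eilenberg part, giving a $d_{p-2}$-differential $\gamma_p(c\tens x_2)\mapsto \overline{\beta Q^0}|c\tens x_2$; the other operation class $\overline{Q^0}|c\tens x_2$ \emph{survives} and contributes to $H_0$, while the CE class $\gamma_p(c\tens x_2)$ dies, so the dimensions match the Betti numbers only after this exchange, and the inequality $\sum_i\dim_\F H_i\geq\sum_i\beta_i$ rules out all further differentials. Without identifying this crossing differential (and without the comparison of $\F$- and $\mathbb{Q}$-bases of $\g_1$ versus $\g_1'$ that makes $\dim\wt_p H_{*,*}(\CE(\g_1;\F))=\sum_i\beta_i$, which uses that $H^*(\Sigma_{g,1}^+;\Z)$ is torsion-free), the dimension count you need does not close.

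Two further points. First, your description of the $E^2$-page is too coarse: the operations appear on every simplicial level ($\Lie_{\R}^{\circ s}$), and pinning down that only the two classes above survive to $E^2$ in weight $p$ is a genuine computation (\cite[Proposition 6.5]{zhang}, quoted as \Cref{oddE2}); it cannot be read off from Kjaer's theorem alone. Second, the statement includes $p=3$, where the extra relation $\overline{\beta Q^{j}}(x)=[[x,x],x]$ (\Cref{triplebracketp=3}) changes the picture: the cancellation of $\gamma_3(c\tens x_2)$ against $\overline{\beta Q^0}|c\tens x_2$ already happens at the level of $d_1$, and the $E^2$-page itself differs; your proposal does not address this case at all.
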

The computation via Morava $E$-theory becomes rather convoluted if one would like to apply it to the closed genus one surface $T$, which is the only parallelizable closed surface. Following a similar approach, the second author   studied Knudsen's spectral sequence  with odd primary coefficients in \cite{zhang} and showed that $H_*(B_k(M);\F),\ k=2,3$ depends on the cohomology ring $H^*(M^+;\F)$ when $M$ is an even dimensional parallelizable manifold, which is in contrast to the case when $M$ is odd dimensional.

In this paper, we build  on the  work of the second author and attack the computation of $H_*(B_k(T);\F)$ directly. We prove \Cref{main} by identifying the higher differentials in the  odd primary Knudsen's spectral sequence 
$$E^2_{s,t}(k)=\pi_s\pi_t\big( \B\big(\mathrm{id}, s\lie, \free^{s\lie}(\Sigma^{n}\mathbb{S}) ^{T^+}\big)\tens \mathbb{F}_p \big)(k)\Rightarrow H_{s+t}(B_k(T);\mathbb{F}_p)$$ for $k\leq p$. The argument we use involves a comparison with the spectral sequence computing  $H_*(B_k(T);\mathbb{Q})$ studied by  Knudsen \cite{knudsen} and simple dimension counting. The exact same argument works for the punctured genus $g$ surface for $g>0$, thereby providing a more direct proof of \Cref{bhktheorem}.

\subsection{Outline} In \Cref{section2}, we review the construction of Chevalley-Eilenberg complex of a shifted graded Lie algebra over a field and the structure of the odd primary homology of spectral Lie algebras. Then we recall previous computations of the $E^2$-page of the odd primary Knudsen's spectral sequence. Comparing with the computation of the rational homology of $B_k(M)$ by Knudsen where $M=T$ or $\Sigma_{g,1}$, we deduce the main result of the paper \Cref{main} in \Cref{section3}.

\subsection{Acknowledgements.}
The first author wishes to thank the Research Science Institute program and the Department of Mathematics at MIT for the opportunity. The second author would like to thank Jeremy Hahn, Haynes Miller, and Andrew Senger for helpful conversations.
\subsection{Conventions}
Let $k$ be a field. A weighted graded $k$-module  $M$ is an $\mathbb{N}$-indexed collection of $\mathbb{Z}$-graded $k$-modules  $\{M(w)\}_{w\in\mathbb{N}}$. The weight grading of an element $x\in M(w)$ is $w$. Morphisms are weight preserving morphisms of graded $k$-modules.
Denote by $\Mod_{k}$  the category of weighted graded $k$-modules. We omit the adjectives weighted, graded from here on. The Day convolution $\tens$ makes $\Mod_{k}$ a symmetric monoidal category and the Koszul sign rule $x\tens y=(-1)^{|x||y|}y\tens x$  depends only on the internal grading. Denote by $\wt_i(M)$ the weight $i$ part of the $k$-module $M$.

\section{Preliminaries}\label{section2}
\subsection{The Chevalley-Eilenberg complex}

A \textit{shifted Lie algebra} $L$ over $k$ is a $k$-module equipped with a shifted Lie bracket $$[- ,-]:L_m\tens L_n\rightarrow L_{m+n-1}$$ that satisfies graded commutativity $[x,y]=(-1)^{|x||y|}[y,x]$, the graded Jacobi identity $$(-1)^{|x||z|}[x,[y,z]]+(-1)^{|y||x|}[y,[z,x]]+(-1)^{|z||y|}[z,[x,y]]=0,$$ and adds weight. When $p=3$ we further require that $[[x,x],x]=0$ for all $x\in L$. Denote by $\Lie_{k}$ the category of shifted weighted graded Lie algebras over $k$.

\begin{definition}\cite{CE}\cite{may}\label{CE}
  Suppose that $k$ has characteristics away from two. For a $\Lie_k$-algebra $L$ , let $L_{\text{even}}$ and $L_{\text{odd}}$ denote the elements in $L$ with even and odd degree, respectively. The \textit{Chevalley-Eilenberg} complex of $L$ is the chain complex  
    $$\CE(L;k) = (\Gamma^\bullet(L_{\text{even}})\otimes \Lambda^\bullet (L_{\text{odd}}), \partial),$$
    where $\Gamma^\bullet$ and $\Lambda^\bullet$ are respectively the graded shifted divided power and exterior algebra functor over $k$, and the differential $\partial$ on an element
    $\gamma_{k_1}(x_1) \cdots \gamma_{k_m}(x_m)\langle y_1,  \dots, y_n\rangle\in\Gamma^{\bullet}(L_{\text{even}})\otimes \Lambda^\bullet (L_{\text{odd}})$
    is 
    \begin{align*}
        &\sum_{1\le i <j \le m} \gamma_{k_1}(x_1)\cdots \gamma_{k_i-1}(x_i)\cdots \gamma_{k_j-1}(x_j) \cdots \gamma_{k_m}(x_m)\langle [x_i,x_j], y_1, \dots y_n\rangle \\
        + &\sum_{1\le i < j \le n}(-1)^{i+j-1}\gamma_{k_1}(x_1)\cdots \gamma_{k_m}(x_m) \langle [y_i,y_j], y_1, \dots, \widehat{y_i},\dots \widehat{y_j},\dots y_n\rangle \\
        + &\frac 12 \sum_{i=1}^m \gamma_{k_1}(x_1)\cdots \gamma_{k_i-2}(x_i)\cdots \gamma_{k_m}(x_m) \langle [x_i, x_i], y_1, \dots, y_n\rangle\\
        + &\sum_{i=1}^m\sum_{j=1}^n (-1)^{j-1}\gamma_1([x_i, y_j])\gamma_{k_1}(x_1)\cdots \gamma_{k_i-1}(x_i)\cdots \gamma_{k_m}(x_m) \langle y_1, \dots, \widehat{y_j}, \dots, y_n\rangle.
    \end{align*}
\end{definition}
Note that the differential $\partial$ preserves weights, so $H_{*,*}(\wt_k(\CE(L;k)))=\wt_k(H_{*,*}(\CE(L;k)))$.
The Chevellay-Eilenberg complex is useful in computing
the $\Lie_k$-algebra homology. 
\begin{theorem}\cite{may}\cite{priddy}
For $L$ a $\Lie_k$-algebra, its  $\Lie_k$-algebra homology is given by
    $$ H^{\Lie_k}_{*,*}(L):= \pi_{*,*}(\B(\mathrm{id}, \Lie_k,L)\oplus k)\cong H_{*,*}(\mathrm{CE}(L;k)).$$
\end{theorem}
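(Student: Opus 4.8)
The plan is to run the operadic Koszul-duality comparison for the shifted Lie operad, with attention to the characteristic; throughout $k$ is a field with $\mathrm{char}\,k\neq 2$, and if $\mathrm{char}\,k=3$ the axiom $[[x,x],x]=0$ of $\Lie_k$ is in force. The goal is to exhibit the Chevalley--Eilenberg complex $(\CE(L;k),\partial)$ of \Cref{CE} as a small model for $\B(\id,\Lie_k,L)\oplus k$, so that the claimed isomorphism follows on passing to homology.

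First I would invoke Koszul duality for the shifted Lie operad. The monadic bar construction $\B(\id,\Lie_k,L)$ carries a filtration by operadic arity whose associated spectral sequence has $E^1$-page the operadic bar construction, i.e.\ the Koszul complex $\mathrm{Bar}(\Lie_k)(L)$, and the content of Koszulness is that this spectral sequence degenerates, so that $\pi_{*,*}(\B(\id,\Lie_k,L)\oplus k)\cong H_{*,*}(\mathrm{Bar}(\Lie_k)(L))$. The underlying weighted graded module of $\mathrm{Bar}(\Lie_k)(L)$ is the Koszul-dual cooperad of $\Lie_k$ --- the shifted cocommutative cooperad --- evaluated on $L$, i.e.\ the cofree conilpotent graded-cocommutative coalgebra on $L$; over a field this is computed by the $\Sigma_n$-\emph{invariants} of the $L^{\otimes n}$, and so equals $\Gamma^\bullet(L_{\mathrm{even}})\otimes\Lambda^\bullet(L_{\mathrm{odd}})$ --- over $\mathbb{Q}$ just the shifted symmetric algebra, but over $\F$ the divided powers on the even part are forced, as invariants and coinvariants there diverge. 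Thus $\mathrm{Bar}(\Lie_k)(L)$ has the same underlying module as $\CE(L;k)$.

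Second I would match the Koszul differential of $\mathrm{Bar}(\Lie_k)(L)$ with the Chevalley--Eilenberg differential $\partial$: it is obtained by pairing the cooperadic cocomposition of the shifted cocommutative cooperad against the Lie bracket, and its four summands are exactly the bracket applied to two distinct even generators, to two distinct odd generators, to an even generator with itself (the $\tfrac{1}{2}[x_i,x_i]$ term, where $\mathrm{char}\,k\neq 2$ is used, and at $p=3$ the relation $[[x,x],x]=0$ makes it well defined on the divided-power part), and to an even with an odd generator. This matching is essentially forced by naturality in $L$ together with $\partial^2=0$ once it has been checked in low operadic arity; granting it, $\mathrm{Bar}(\Lie_k)(L)=(\CE(L;k),\partial)$ as complexes and the theorem follows on passing to homology. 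Alternatively one may route the degeneration through the shifted universal enveloping algebra $U(L)$: since $U$ is a monoidal left adjoint sending free Lie algebras to tensor algebras, it identifies $\pi_{*,*}(\B(\id,\Lie_k,L)\oplus k)$ with $\mathrm{Tor}^{U(L)}_{*,*}(k,k)$, and $(\CE(L;k),\partial)$ arises by applying $k\otimes_{U(L)}(-)$ to a Koszul-type resolution $\CE(L;k)\otimes U(L)\xrightarrow{\sim}k$ whose exactness rests on a divided-power Poincar\'e--Birkhoff--Witt theorem for $U(L)$.

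The step I expect to be the main obstacle is the characteristic-$p$ input: that the shifted Lie operad is still Koszul over $\F$ in the relevant sense --- equivalently, that the Koszul (Chevalley--Eilenberg) complex really does compute $\pi_{*,*}\B(\id,\Lie_k,L)$, with no further $\Sigma_n$-homology corrections --- and, on the enveloping-algebra side, the divided-power PBW theorem together with the acyclicity of the resulting Koszul resolution over $\F$. This is precisely where $\mathrm{char}\,k\neq 2$, and at $p=3$ the relation $[[x,x],x]=0$, are used, and the statement fails without them; over $\mathbb{Q}$ everything here collapses to the classical Chevalley--Eilenberg theorem.
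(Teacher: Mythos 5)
Note first that the paper does not prove this statement at all: it is quoted directly from May and Priddy, so the only meaningful comparison is with those classical arguments. Your second route --- pass to the universal enveloping algebra, use that $U$ takes free shifted Lie algebras to tensor algebras to identify $\pi_{*,*}(\B(\id,\Lie_k,L)\oplus k)$ with $\mathrm{Tor}^{U(L)}_{*,*}(k,k)$, and then exhibit $\CE(L;k)\otimes U(L)$ as a $U(L)$-free resolution of $k$ whose acyclicity is proved by a PBW filtration reducing to the abelian case (tensor product of the standard Koszul resolutions of polynomial and exterior algebras, which is where divided powers and the hypotheses $\mathrm{char}\,k\neq 2$, and $[[x,x],x]=0$ at $p=3$, enter) --- is essentially the May--Priddy proof, so on that route you are aligned with the cited sources. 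Two points there still deserve explicit care: the identification with $\mathrm{Tor}^{U(L)}$ requires knowing that $U$ applied levelwise to the bar resolution $\B(\Lie_k,\Lie_k,L)\to L$ is still a resolution (the extra degeneracy is not a Lie map, so this again rests on a PBW filtration argument), and the matching of internal/simplicial gradings with the shift.

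Your primary route, by contrast, has a genuine gap exactly at the step you flagged. The monadic bar construction is built from \emph{strict} $\Sigma_n$-coinvariants, and homology does not commute with strict coinvariants; Koszulness of the Lie operad (bar homology concentrated in top degree with trivial $\Sigma_n$-action) does not formally give the homotopy of $\B(\id,\Lie_k,L)$, because the levels of the operadic bar complex are not $\Sigma_n$-projective in weights divisible by $p$. Concretely, at weight $p$ your argument must explain why the answer is the divided power $\Gamma^p$ (invariants), as in \Cref{CE}, rather than $\mathrm{Sym}^p$ (coinvariants); the norm map between them is not an isomorphism mod $p$, and "the content of Koszulness is that the spectral sequence degenerates" supplies no mechanism for this. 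This is precisely the phenomenon that, in the spectral setting of this paper, produces the extra classes $\overline{\beta^{\epsilon}Q^{j}}$ on the $E^2$-page; that no analogous classes appear in the discrete algebraic case is a computation, classically carried out via the explicit small resolution of May/Priddy (your route two), not a formal consequence of operad-level Koszul duality. Since you supplied that second route as the fallback, the proposal as a whole reaches a correct proof, but the first route should not be presented as self-contained.
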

\subsection{Operations on the odd primary homology of spectral Lie algebras}

Building on the work of Arone-Mahowald \cite{am} and Johnson \cite{johnson},
Ching \cite{ching} and Salvatore showed that the Goodwillie derivatives $\{\partial_n(\mathrm{Id})\}_n$ of the identity functor $\mathrm{Id}:\mathrm{Top}_*\rightarrow\mathrm{Top}_*$  form an operad  in Spectra that is Koszul dual to the non-unital $\mathbb{E}_\infty$-operad. This is called the spectral Lie operad and we denote it by $s\lie$.
Algebras over the operad $s\lie$ are called \textit{spectral Lie algebras}, with structure maps $\partial_k(\mathrm{Id})\tens_{h\Sigma_k}L^{\tens k}\rightarrow L$ for all $k\geq 1$.

In \cite{kjaer}, Kjaer studied the structure of the mod $p$ homology of spectral Lie algebras for $p>2$ following the approach of Behrens \cite{behrens} and Antol\'{i}n-Camarena \cite{omar} when $p=2$.
\begin{proposition}\cite[Definition 3.2]{kjaer}
    Let $L$ be a spectral Lie algebra. Then $H_*(L;\F)$ admits unary operations of weight $p$ $$\overline{ \beta^{\epsilon} Q^j}: H_*(L;\F)\rightarrow H_{*+2(p-1)i-\epsilon-1}(L;\F),\ x\mapsto\xi_*( \sigma^{-1}\beta^{\epsilon} Q^j(x)) $$ for $\epsilon\in\{0,1\}, j\in\mathbb{Z}$. Here $\xi:\partial_p(\mathrm{Id})\tens_{h\Sigma_p}L^{\tens p}\rightarrow L$ is the $p$th structure map of the spectral Lie algebra $L$, $\sigma^{-1}$ the desuspension isomorphism, and $\beta^{\epsilon}\Q^j$ a mod $p$ Dyer-Lashof operation.
\end{proposition}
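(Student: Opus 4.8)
The statement is really a construction, and the plan is to produce these operations by the method of Kjaer \cite{kjaer}, which extends to odd primes the approach of Behrens \cite{behrens} and Antol\'{i}n-Camarena \cite{omar} at $p=2$. The only structure of $L$ that enters is the $p$-ary operadic map $\xi\colon\partial_p(\mathrm{Id})\tens_{h\Sigma_p}L^{\tens p}\to L$, which is natural in the spectral Lie algebra $L$, so it suffices to attach to each class $x\in H_n(L;\F)$, naturally, a class in $H_*(\partial_p(\mathrm{Id})\tens_{h\Sigma_p}L^{\tens p};\F)$ and then apply $\xi_*$. First I would note that $H\F\tens\bigl(\partial_p(\mathrm{Id})\tens_{h\Sigma_p}L^{\tens p}\bigr)$ depends, as the homotopy orbit of a $\Sigma_p$-spectrum built from $\partial_p(\mathrm{Id})$ and $H\F\tens L$, only on the $H\F$-module $H\F\tens L$, i.e.\ only on the graded $\F$-vector space $H_*(L;\F)$. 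Representing $x$ by a map of $H\F$-modules $\Sigma^nH\F\to H\F\tens L$ and using functoriality, the problem reduces to the universal case: exhibit natural classes in $H_*\bigl(\partial_p(\mathrm{Id})\tens_{h\Sigma_p}(\Sigma^nH\F)^{\tens p};\F\bigr)$.

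The next step is to compute this group. Nonequivariantly $\partial_p(\mathrm{Id})=s\lie(p)$ is a wedge of $(p-1)!$ spheres concentrated in degree $1-p$, by Ching's identification with the Spanier--Whitehead dual of the partition complex \cite{ching} together with the classical homotopy type of the partition poset; hence $\partial_p(\mathrm{Id})\tens(\Sigma^nH\F)^{\tens p}$ has $\F$-homology concentrated in a single internal degree, with $\Sigma_p$ acting through the Lie representation $\mathrm{Lie}_p$ (up to a sign twist) tensored with the sign $\mathrm{sgn}^n$ coming from permuting the smash factors. The homotopy-orbit spectral sequence therefore collapses, and up to this internal shift
\[
H_*\bigl(\partial_p(\mathrm{Id})\tens_{h\Sigma_p}(\Sigma^nH\F)^{\tens p};\F\bigr)\;\cong\;H_*\bigl(\Sigma_p;\mathrm{Lie}_p\otimes\mathrm{sgn}^n\bigr).
\]
By the Arone--Mahowald analysis of the layers of the Goodwillie tower of the identity \cite{am} (equivalently, the Steinberg-module behavior of $\mathrm{Lie}_p$ modulo $p$, which one can also extract by transferring to the Sylow subgroup $C_p$ and taking $\F^\times$-eigenspaces), this group is one-dimensional in exactly the degrees $n+2j(p-1)-\epsilon-1$ realized with $\epsilon\in\{0,1\}$ and $j$ beyond an instability bound, and zero otherwise. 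Since these degrees are one less than those of the honest Dyer--Lashof classes $\beta^\epsilon Q^j\iota_n\in H_*(D_p(\Sigma^nH\F);\F)$ in the classical $\mathbb{E}_\infty$-extended power, I would \emph{name} the resulting generators $\sigma^{-1}\beta^\epsilon Q^j\iota_n$; when $p=3$ the vanishing of the lowest such class on an odd generator is exactly the extra identity $[[x,x],x]=0$ imposed on a shifted Lie algebra in that characteristic.

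Finally I would transport these universal classes back through the equivalences of the first step, naturally in $n$ and in the $H\F$-module $H\F\tens L$, and push forward along $\xi_*$; this yields the operations $\overline{\beta^\epsilon Q^j}\colon H_*(L;\F)\to H_{*+2(p-1)j-\epsilon-1}(L;\F)$ on an arbitrary spectral Lie algebra $L$, with the formula $x\mapsto\xi_*\bigl(\sigma^{-1}\beta^\epsilon Q^j(x)\bigr)$ of the statement. The main obstacle is the computation in the second step: pinning down the $\Sigma_p$-equivariant mod $p$ behavior of $\partial_p(\mathrm{Id})$ precisely enough to identify the surviving classes with genuine Dyer--Lashof operations. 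At $p=2$ this is painless, since $\partial_2(\mathrm{Id})\simeq S^{-1}$ with trivial $\Sigma_2$-action; at odd primes it rests on the Arone--Mahowald calculation and its reorganization by Behrens and by Kjaer, which is the real content of the proposition.
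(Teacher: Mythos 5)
The paper offers no proof of this proposition---it is quoted verbatim from Kjaer \cite{kjaer}---and your outline reproduces essentially that construction: reduce by naturality to the universal case of the (relative, over $H\F$) smash power of $\Sigma^n H\F$, identify its homotopy, via the Arone--Mahowald/Steinberg analysis of $\partial_p(\mathrm{Id})$ \cite{am}\cite{ching}, with a shift of $H_*(\Sigma_p;\mathrm{Lie}_p\otimes\mathrm{sgn}^{n})$ concentrated in the expected Dyer--Lashof degrees, name the resulting generators $\sigma^{-1}\beta^{\epsilon}Q^{j}\iota_n$, and push forward along $\xi_*$, exactly as in \cite{kjaer} following \cite{behrens}\cite{omar} at $p=2$. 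The only slip is your parenthetical about $p=3$: the extra identity concerns \emph{even}-degree classes (for odd degree $[x,x]=0$ already by graded commutativity), and there the bottom operation does not vanish but equals $[[x,x],x]$ (cf.\ \Cref{triplebracketp=3}); this is tangential to the construction itself.
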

It follows from the unstability of Dyer-Lashof operations that $\overline{ \beta^{\epsilon} Q^j}(x)=0$ if $j<\frac{|x|}{2}$. There is also a $\Lie_{\F}$-algebra structure on $H_*(L;\F)$, induced by the second structure map  $$\partial_2(\mathrm{Id})\tens_{h\Sigma_2} L^{\tens 2}\simeq \partial_2(\mathrm{Id})\tens L^{\tens 2}_{h\Sigma_2}\simeq \mathbb{S}^{-1}\tens L^{\tens 2}_{h\Sigma_2}\rightarrow L.$$

\begin{proposition}\cite[Proposition 3.7]{kjaer}\label{oddvanish}
    For $L$ a spectral Lie algebra, $[\overline{ \beta^{\epsilon} Q^j}(x), y]=0$ for any $\epsilon, j$ and $x,y\in H_*(L;\F)$ . 

\end{proposition}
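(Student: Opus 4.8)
The plan is to reduce the vanishing statement to a property of the spectral Lie operad itself, namely an operadic identity relating the composite of the $p$th structure map with the bracket. First I would set up the relevant composition at the level of operad multiplication: the element $[\overline{\beta^\epsilon Q^j}(x),y]$ is obtained by first applying the $p$th structure map $\xi\colon \partial_p(\mathrm{Id})\tens_{h\Sigma_p} L^{\tens p}\to L$ (composed with the desuspension) to build $\overline{\beta^\epsilon Q^j}(x)$, and then applying the $2$nd structure map $\partial_2(\mathrm{Id})\tens_{h\Sigma_2}L^{\tens 2}\to L$ with one input being $\overline{\beta^\epsilon Q^j}(x)$ and the other being $y$. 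By operad associativity this is a single structure map $\partial_{p+1}(\mathrm{Id})\tens_{h\Sigma_p\times\Sigma_1} (L^{\tens p}\tens L)\to L$, arising from the partial composition $\partial_2(\mathrm{Id})\circ_1 \partial_p(\mathrm{Id})\to \partial_{p+1}(\mathrm{Id})$ of the spectral Lie operad. So the computation of $[\overline{\beta^\epsilon Q^j}(x),y]$ takes place inside $H_*\big(\partial_{p+1}(\mathrm{Id})\tens_{h(\Sigma_p\times\Sigma_1)}(L^{\tens p}\tens L);\F\big)$ together with the appropriate desuspension; my job is to show the relevant class there is zero.

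Next I would use the known description of $H_*(\partial_n(\mathrm{Id});\F)$ and of the operad structure, via the Koszul/bar duality with the (non-unital) $\mathbb{E}_\infty$-operad, equivalently the Arone--Mahowald/Behrens--Kjaer model. Concretely, the mod $p$ homology operations on spectral Lie algebras are governed by admissible words in the operations $\overline{\beta^\epsilon Q^j}$ and the shifted Lie bracket, and the Jacobi/shuffle relations among iterated composites are controlled by the partition complex / the bar construction on the commutative operad. The class $[\overline{\beta^\epsilon Q^j}(x),y]$ corresponds to a specific summand built from a top-dimensional class of $\partial_p(\mathrm{Id})$ (the one that survives to produce a Dyer--Lashof operation, i.e. the image of a transfer/norm class for $\Sigma_p$) grafted into one leaf of $\partial_2(\mathrm{Id})$. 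I would then observe that the $\Sigma_p$-transfer class supporting $\overline{\beta^\epsilon Q^j}$ lies in the image of a norm map, and that grafting a norm class into the bracket yields something that, after desuspension, is forced to vanish: the bracket structure map $\partial_2(\mathrm{Id})\simeq \Sigma^{-1}\mathbb{S}$ together with the homotopy orbits $(L^{\tens 2})_{h\Sigma_2}$ kills the relevant induced class because the composite factors through a piece of $H_*\partial_{p+1}(\mathrm{Id})$ that is zero in the relevant internal degree. Equivalently, in the Behrens/Kjaer description of $H_*(\free^{s\lie}(V);\F)$ as a free object on admissible words, a bracket whose outer argument is a $\overline{\beta^\epsilon Q^j}$-operation is simply not one of the admissible generators, and one checks it is already zero (not merely rewritable) because there is no lower-weight admissible word of the correct degree and weight $p+1$ for it to equal.

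The cleanest route, which I would actually carry out, is to verify the identity on the free spectral Lie algebra $\free^{s\lie}(V)$ on a wedge of spheres $V$ and then extend by naturality: since every spectral Lie algebra $L$ receives a map from a free one hitting any prescribed class $x$ (and $y$), and all operations in sight are natural, it suffices to prove $[\overline{\beta^\epsilon Q^j}(x),y]=0$ when $x,y$ are the tautological generators in $H_*(\free^{s\lie}(V);\F)$. There the homology is explicitly a known functor of $H_*(V;\F)$ (Kjaer, following Behrens/Antolín-Camarena), with an explicit basis of admissible monomials; one reads off that the weight-$(p+1)$ part contains no element equal to $[\overline{\beta^\epsilon Q^j}(x),y]$ other than $0$, by comparing internal degrees against the list of admissible generators. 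The main obstacle is the bookkeeping in this last step: correctly normalizing the degree shifts (the $\sigma^{-1}$ desuspensions, the operadic suspension in $\partial_n$, the Koszul signs) so that the degree count is airtight, and making sure the argument covers the boundary case $p=3$ where the extra relation $[[x,x],x]=0$ is imposed. I expect the operadic associativity reduction to be formal, and essentially all the real work to be this degree/admissibility comparison in the free algebra.
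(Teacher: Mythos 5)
You are proposing a proof for a statement the paper itself does not prove but simply imports from Kjaer, so the question is whether your argument stands on its own; it has two genuine gaps. First, the reduction ``every spectral Lie algebra receives a map from a free algebra on a wedge of spheres hitting any prescribed classes'' is false: $x,y\in H_*(L;\F)$ need not lie in the image of the mod $p$ Hurewicz map, so no map $\free^{s\lie}(\mathbb{S}^{|x|}\vee\mathbb{S}^{|y|})\to L$ need exist carrying tautological generators to $x$ and $y$. The standard repair is to base change to $H\F$-modules (a homology class is an honest $H\F$-module map $\Sigma^{|x|}H\F\to L\tens H\F$, and the base-changed operad acts on $L\tens H\F$) and verify the identity for the free $H\F$-linear spectral Lie algebra, or equivalently to argue with universal classes in $H_*\big(\partial_{p+1}(\mathrm{Id})\tens_{h(\Sigma_p\times\Sigma_1)}(-)\big)$; as written, your naturality step does not go through.

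Second, and more seriously, the step you single out as the real work --- ruling out $[\overline{\beta^{\epsilon}Q^j}(x),y]\neq 0$ in the free algebra by comparing internal degrees against the admissible weight-$(p+1)$ basis --- fails. In weight $p+1$ the admissible basis consists of honest Lie words, and these can occupy exactly the internal degree of $[\overline{\beta^{\epsilon}Q^j}(x),y]$: for $p=3$, $\epsilon=1$, $|x|=|y|=2j$ one gets $|[\overline{\beta Q^j}(x),y]|=8j-3$, which is precisely the degree of the weight-$4$ Lie words in $x,y$; similar coincidences occur for larger $p$ (e.g.\ $p=5$, $|x|=2j$, $|y|=2j+2$). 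So degree bookkeeping cannot show the class is zero rather than a nonzero combination of Lie words, and \Cref{triplebracketp=3} shows such coincidences are genuinely realized by identities ($\overline{\beta Q^j}(x)=[[x,x],x]$ at $p=3$), so nothing formal forces vanishing. There is also a circularity worry: the definition of $\Lie_{\R}(M)$ by admissible words and the extension of the bracket to it are set up \emph{using} \Cref{oddvanish} (see the sentence preceding \Cref{kjaer}), and in Kjaer's paper the vanishing precedes the freeness theorem, so one cannot invoke \Cref{kjaer} to prove it. The viable route is the one you gesture at in your middle paragraph and then abandon: identify $[\overline{\beta^{\epsilon}Q^j}(x),y]$ with the image of a class under the map induced by the operadic composition $\partial_2(\mathrm{Id})\circ_1\partial_p(\mathrm{Id})\to\partial_{p+1}(\mathrm{Id})$ on the relevant homotopy-orbit constructions, and show by an equivariant analysis of the Goodwillie derivatives (Arone--Mahowald) that this map kills the transfer/Tate classes supporting the Dyer--Lashof operations; that equivariant input, not a degree count, is the actual content of Kjaer's Proposition 3.7.
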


Define a functor $\Lie_{\R}:\Mod_{\mathbb{F}_p}\rightarrow\Mod_{\F}$ as follows. For $M\in\Mod_{\F}$, let $A$ be an $\F$-basis for the free shifted Lie algebra $\free^{\Lie_{\F}}(M)$. The graded $\F$-module $\Lie_{\R}(M)$ has basis
$$\{\overline{ \beta_1^{\epsilon_1} Q^{j_1}}\cdots \overline{ \beta_k^{\epsilon_k} Q^{j_k}}| x,\ \  x\in A,  j_k\geq \frac{|x|}{2}, j_i\geq pj_{i+1}-\epsilon_{i+1}\forall i\}.$$
The $\Lie_{\F}$-structure on $\free^{\Lie_{\F}}$ can be extended to that on $\Lie_{\R}(M)$ via \Cref{oddvanish}.

\begin{theorem}\cite[Theorem 5.2]{kjaer}\label{kjaer}
For $X$ a spectrum. there is an isomorphism of $\Lie_{\F}$-algebras
$$\Lie_{\R}(H_*(X;\F))\rightarrow H_*(\free^{s\lie}(X);\F).$$
\end{theorem}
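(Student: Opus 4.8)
The plan is to construct the comparison map from the universal property of the free functor $\Lie_{\R}$, and then to verify that it is an isomorphism by reducing to wedges of spheres and invoking the Arone--Mahowald analysis of the Goodwillie derivatives $\partial_k(\mathrm{Id})$.

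\emph{Construction of the map.} First I would record that $\Lie_{\R}$ is the free functor for a monad on $\Mod_{\F}$ whose algebras are shifted Lie algebras over $\F$ equipped with admissible composites of the unary operations $\overline{\beta^{\epsilon}Q^{j}}$, subject to the instability vanishing $\overline{\beta^{\epsilon}Q^{j}}(x)=0$ for $j<|x|/2$ and to the rule of \Cref{oddvanish} that any bracket involving such an operation vanishes; the admissible basis appearing in the definition of $\Lie_{\R}(M)$ is precisely a basis for this free object. By \Cref{oddvanish}, the Adem-type relations among the Dyer--Lashof operations, and the unstability relation, the $\F$-homology $H_*(L;\F)$ of any spectral Lie algebra $L$ is an algebra over this monad. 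Applying $H_*(-;\F)$ to the unit $X\to\free^{s\lie}(X)$ yields a map of $\F$-modules $H_*(X;\F)\to H_*(\free^{s\lie}(X);\F)$, which therefore extends uniquely to a map of $\Lie_{\R}$-algebras, in particular of $\Lie_{\F}$-algebras, $\Phi_X\colon\Lie_{\R}(H_*(X;\F))\to H_*(\free^{s\lie}(X);\F)$.

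\emph{Reduction to spheres.} Because every $H\F$-module splits as a wedge of suspensions of $H\F$, the homology $H_*(\free^{s\lie}(X);\F)=\bigoplus_{k\ge1}H_*\big(\partial_k(\mathrm{Id})\tens_{h\Sigma_k}X^{\tens k};\F\big)$ depends, through the natural transformation $\Phi_X$, only on the graded $\F$-vector space $H_*(X;\F)$; the same is tautologically true of the source. Hence it suffices to prove $\Phi_X$ is an isomorphism when $X=\bigvee_{\alpha}S^{n_\alpha}$ is a wedge of sphere spectra. In that case the Künneth theorem reduces the computation of each summand to the extended powers $\partial_k(\mathrm{Id})\tens_{h\Sigma_k}\big(\bigvee_{\alpha}S^{n_\alpha}\big)^{\tens k}$, and decomposing the smash power into its $\Sigma_k$-orbit pieces reduces it further to the extended powers $\partial_k(\mathrm{Id})\tens_{h\Sigma_k}(S^{m})^{\tens k}$ of single spheres, together with the Hilton--Milnor-type bookkeeping of how the free Lie functor distributes over a wedge.

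\emph{Homology of the derivatives and the count.} The remaining, and hardest, step is the computation of $H_*\big(\partial_k(\mathrm{Id})\tens_{h\Sigma_k}(S^{m})^{\tens k};\F\big)$. Here I would use the Arone--Mahowald identification of $\partial_k(\mathrm{Id})$ with the equivariant Spanier--Whitehead dual of a suspension of the partition complex $|\Pi_k|^{\diamond}$, together with the Arone--Dwyer description of its mod $p$ homology as a $\Sigma_k$-spectrum. Running the homotopy-orbit spectral sequence and feeding in the classical homology of the symmetric groups and of extended powers of spheres, one finds that $\partial_{p}(\mathrm{Id})\tens_{h\Sigma_p}(-)^{\tens p}$ contributes precisely the desuspended operations $\sigma^{-1}\beta^{\epsilon}Q^{j}$ in the allowable range, that passing to $\partial_{p^{j}}(\mathrm{Id})$ iterates this, and that, summed over all $k$, the total $\F$-homology is the free shifted Lie algebra on $H_*(X;\F)$ together with all admissible towers of these operations on its basis elements. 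Comparing in each weight and internal degree with the combinatorially defined $\Lie_{\R}(H_*(X;\F))$ --- whose admissibility bounds $j_i\ge pj_{i+1}-\epsilon_{i+1}$ and $j_k\ge|x|/2$ are exactly the Adem and instability relations above --- then forces $\Phi_X$ to be an isomorphism, hence an isomorphism of $\Lie_{\F}$-algebras. The essential obstacle is this last input: controlling the $\Sigma_k$-equivariant $\F$-homology of the partition complex and matching the operations it carries against the admissibility relations defining $\Lie_{\R}$, which is the technical heart of \cite{kjaer}.
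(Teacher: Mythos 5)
This statement is not proved in the paper: it is quoted from Kjaer \cite{kjaer}, so there is no internal argument to compare against. Your outline does track the strategy of the cited proof (construct a natural comparison map, reduce to the case of homology free on explicit generators, then feed in the Arone--Mahowald/partition-complex computation of $H_*\big(\partial_k(\mathrm{Id})\tens_{h\Sigma_k}(S^m)^{\tens k};\F\big)$ together with a Hilton--Milnor-type splitting over wedges), but two steps are looser than they can afford to be, and the last one is only a pointer. First, your construction of $\Phi_X$ from a universal property assumes that $H_*(L;\F)$ of an arbitrary spectral Lie algebra is an algebra over a monad whose defining relations include ``Adem-type relations'' among the $\overline{\beta^{\epsilon}Q^{j}}$. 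As this paper notes explicitly in a footnote, no such relations were known at odd primes at the time; $\Lie_{\R}(M)$ is defined only as a graded vector space with a prescribed admissible basis, not as the free algebra of an identified algebraic theory. The map should instead be defined directly on the admissible basis, sending an admissible word on a Lie word to the corresponding composite of operations in $H_*(\free^{s\lie}(X);\F)$; $\Lie_{\F}$-linearity then follows from \Cref{oddvanish}, and no Adem relations are needed.

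Second, the reduction to wedges of spheres is not automatic, because an $\F$-basis of $H_*(X;\F)$ need not be realized by maps from sphere spectra. The correct route is base change: $H\F\tens\big(\partial_k(\mathrm{Id})\tens_{h\Sigma_k}X^{\tens k}\big)\simeq\big((H\F\tens\partial_k(\mathrm{Id}))\tens_{H\F}(H\F\tens X)^{\tens_{H\F}k}\big)_{h\Sigma_k}$, so the target is a functor of the $H\F$-module $H\F\tens X$, which splits into suspensions of $H\F$; one must also check that $\Phi$ is natural for this identification before concluding it suffices to treat wedges of spheres (your phrase ``depends, through the natural transformation $\Phi_X$'' has this backwards --- the functoriality must be established independently of $\Phi$). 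Finally, the genuinely hard content --- the $\Sigma_k$-equivariant mod $p$ homology of the partition complexes and the matching of the resulting operations against the admissibility bounds $j_i\geq pj_{i+1}-\epsilon_{i+1}$, $j_k\geq|x|/2$ --- is deferred entirely to the citation, and at $p=3$ the bookkeeping is additionally delicate because of the identity $\overline{\beta^{\epsilon}Q^{j}}(x)=[[x,x],x]$ discussed in \Cref{triplebracketp=3}. So as written this is a reasonable outline of Kjaer's argument rather than an independent proof.
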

\subsection{Odd primary Knudsen's spectral sequence}
The odd primary Knudsen's spectral sequence was first investigated by the second author in \cite{zhang}.
Using the skeletal filtration of the geometric realization of the bar construction in \Cref{knudsen}, we obtain Knudsen's spectral sequence with mod $p$ coefficients
\begin{equation}\label{oddprimarysseq}
    E^2_{s,t}(k)=\pi_s\pi_t\big( \B\big(\mathrm{id}, s\lie, \free^{s\lie}(\Sigma^{n}X) ^{M^+}\big)\tens \mathbb{F}_p \big)(k)\Rightarrow H_{s+t}(B_k(M;X);\mathbb{F}_p).
\end{equation}

By repeatedly applying \Cref{kjaer}, we see that the $E^2$-page is the homotopy group of a simplicial $\F$-module $V_\bullet$ with $(\Lie_{\R})^{\circ s}(L)$ as the $s$th simplicial level, where $$L=H_*(\free^{s\lie}(\Sigma^{n}X) ^{M^+};\F)\cong \widetilde{H}^*(M^+;\F)\tens \Lie_{\R}(\Sigma^n H_*(X;\F)).$$
Then $L$ has a $\Lie_{\F}$-structure  given by  $$[y_1\tens x_1,  y_2\tens x_2]=(y_1\cup y_2)\tens[x_1,x_2]$$ \cite[Proposition 5.9]{bhk} and  the action of the unary operations is given by $$\overline{ \beta^{\epsilon} Q^{j}}(y\tens x)=y\tens\overline{ \beta^{\epsilon} Q^{j}}(x)$$ if there is no nonzero Steenrod operation on $H^*(M;\F)$  other than $Sq^0$ \cite[Proposition 4.4]{zhang}.

At the time of this work, there is no published result on the relations among the unary operations $\overline{ \beta^{\epsilon} Q^{j}}$.\footnote{Through private communication, we were informed that Nikolay Konovalov has forthcoming work computing the odd primary relations via Goodwillie calculus.}  Nonetheless, at weight $k<p^2$ it suffices to know how unary operations and $\Lie_{\F}$-brackets commute, which is established by \Cref{oddvanish}. The face maps on $\Lie_{\F}$-brackets are simply $\Lie_{\F}$-algebra structure maps. In \cite{zhang}, the second author computed the $E^2$-page of the spectral sequence (\ref{oddprimarysseq}) in weight $k\leq p$ in terms of $\Lie_{\F}$-algebra homology.

\begin{proposition}\cite[Proposition 6.5]{zhang}\label{oddE2}
Let $M$ be a parallelizable manifold of dimension $n$ and $X$ any spectrum. Set $$\g= \widetilde{H}^*(M^+;\F)\tens \Lie_{\F}(\Sigma^n H_*(X;\F))$$ with  $\Lie_{\F}$-structure given by  $[y_1\tens x_1,  y_2\tens x_2]=(y_1\cup y_2)\tens[x_1,x_2].$  
\begin{enumerate}
    \item For $k<p$, the weight $k$ part of the spectral sequence (\ref{oddprimarysseq})  has $E^2$-page given by $$E^2_{s,t}(k)\cong\wt_k H_{s,t}(\CE(\g;\F)).$$ 
    \item For $p\geq 5$, the weight $p$ part of the spectral sequence (\ref{oddprimarysseq}) has $E^2$-page given by $$E^2(p)_{*,*}\cong \wt_pH_{*,*}(\CE(\g;\F))\oplus \bigoplus_{y\in H, x\in B} \F\Big\{\overline{ \beta^{\epsilon} Q^{j}}|y\tens x, \ \frac{|x|-|y|}{2}\leq j<\frac{|x|}{2}\Big\},$$ where $H$ is an $\F$-basis of $\widetilde{H}^*(M^+;\F)$ and $B$ an $\F$-basis of $H_*(X;\F)$.
\end{enumerate}
\end{proposition}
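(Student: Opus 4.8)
\medskip
\noindent\emph{Proof proposal.} The plan is to read the $E^2$--page off the bar construction of \Cref{knudsen}. Applying \Cref{kjaer} levelwise to that geometric realization and using exactness of $H_*(-;\F)$ identifies the $E^2$--page, as recalled above, with $\pi_{*,*}$ of the simplicial $\F$--module $V_\bullet$ whose $s$th level is $(\Lie_\R)^{\circ s}(L)$, i.e.\ $\B(\id,\Lie_\R,L)$, where $L=\widetilde{H}^*(M^+;\F)\tens\Lie_\R(\Sigma^n H_*(X;\F))$ carries the $\Lie_\F$--bracket and unary operations described earlier. Because every face and degeneracy map preserves weight, $\pi_{*,*}(\wt_k V_\bullet)=\wt_k\pi_{*,*}(V_\bullet)$, so I would argue one weight at a time.

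The fact that drives both parts is that each $\overline{\beta^\epsilon Q^j}$ multiplies weight by $p$. Hence for any $\F$--module $N$ and any $k<p$, the weight--$k$ part of $\Lie_\R(N)$ agrees with that of the free shifted Lie algebra functor $\Lie_\F(N)$, compatibly with structure maps, and the weight--$k$ part of $L$ agrees with that of $\g$. So for $k<p$ there is an isomorphism of simplicial $\F$--modules $\wt_k V_\bullet\cong\wt_k\B(\id,\Lie_\F,\g)$, and part (1) follows from the identification $\pi_{*,*}(\B(\id,\Lie_\F,\g)\oplus\F)\cong H_{*,*}(\CE(\g;\F))$ of Lie--algebra homology with Chevalley--Eilenberg homology (plus the fact, noted after \Cref{CE}, that $\partial$ preserves weight).

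For part (2) the relevant weight is exactly $p$, the first at which the operations appear, and there each class involves at most one unary operation, necessarily applied to a weight--one class. I would filter $\wt_p V_\bullet$ by the number of unary operations occurring, getting a two--step filtration $0\subseteq F_0\subseteq F_1=\wt_p V_\bullet$ with bottom piece $F_0=\wt_p\B(\id,\Lie_\F,\g)$, so $\pi_{*,*}F_0=\wt_p H_{*,*}(\CE(\g;\F))$ by part (1)'s argument. The delicate point is that this is a filtration by simplicial submodules: the face maps are built from the monad multiplication of $\Lie_\R$ and the action $\Lie_\R(L)\to L$, and one checks they turn neither brackets into operation classes nor the reverse --- the second being precisely \Cref{oddvanish}, that a bracket against an operation class vanishes. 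One then studies the quotient $F_1/F_0$, an ``operations only'' simplicial $\F$--module whose levels are spanned by the classes $\overline{\beta^\epsilon Q^j}$ applied to the weight--one generators $y\tens x\in\widetilde{H}^*(M^+;\F)\tens\Sigma^n H_*(X;\F)$ (with $y\in H$, $x\in B$): tracking its face maps --- again using \Cref{oddvanish}, and that below weight $p^2$ the only relations among the $\overline{\beta^\epsilon Q^j}$ one needs are their commutation with $\Lie_\F$--brackets --- computes $\pi_{*,*}(F_1/F_0)$, after which one checks that the extension of $\pi_{*,*}F_0$ by $\pi_{*,*}(F_1/F_0)$ splits. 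The extra summand is $\bigoplus_{y\in H,\,x\in B}\F\{\overline{\beta^\epsilon Q^j}|y\tens x\}$, with $j$ in the range $\frac{|x|-|y|}{2}\le j<\frac{|x|}{2}$ carved out by instability ($\overline{\beta^\epsilon Q^j}(-)=0$ below half the degree of the weight--one class being hit, giving the lower bound) together with the requirement that the operation not already be visible in $\wt_p H_{*,*}(\CE(\g;\F))$ as a divided--power class (the upper bound).

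The main obstacle is the weight--$p$ analysis, in two parts: (i) showing the operation--count filtration is respected by all face and degeneracy maps, which is exactly where $k\le p$ --- equivalently $k<p^2$ --- is used, since beyond this one would need relations among the $\overline{\beta^\epsilon Q^j}$ not supplied by \Cref{oddvanish}; and (ii) the degree bookkeeping that computes $\pi_{*,*}(F_1/F_0)$ and pins down the exact interval for $(\epsilon,j)$, i.e.\ which operation classes are redundant with Chevalley--Eilenberg classes and which survive. Both are in principle routine once the framework is set up; the rest is formal.
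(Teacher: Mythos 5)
This proposition is not proved in the present paper at all: it is imported verbatim from \cite[Proposition 6.5]{zhang}, and Section 2 only assembles the inputs one would need (the identification, via \Cref{kjaer}, of the $E^2$-page with $\pi_{*,*}$ of the simplicial module with levels $(\Lie_{\R})^{\circ s}(L)$, the bracket and unary-operation actions on $L$, and \Cref{oddvanish}). So there is no in-paper proof to compare against; judged on its own terms, your outline is the natural reconstruction and follows the line the paper sketches. Part (1) is fine: operations multiply weight by $p$, so in weight $k<p$ the simplicial object agrees with $\B(\id,\Lie_{\F},\g)$ and May--Priddy gives the Chevalley--Eilenberg identification. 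For part (2), your operation-count filtration is legitimate at $p\geq 5$ because Kjaer's basis shows that weight-$p$ operation classes and bracket words are linearly independent and no face map converts one into the other (exactly what fails at $p=3$, cf.\ \Cref{triplebracketp=3}); note that \Cref{oddvanish} is not really where the weight-$p$ content lies, since in weight exactly $p$ an operation class cannot be bracketed with anything without exceeding weight $p$.

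One substantive correction: your justification of the upper bound $j<\tfrac{|x|}{2}$ is wrong as stated. These classes are not discarded because they are ``already visible in $\wt_pH_{*,*}(\CE(\g;\F))$ as divided-power classes''; the divided powers $\gamma_p$ are bracket-free CE classes whose interaction with the operations only appears later, as the higher differential $\gamma_p(c\tens x_2)\mapsto \overline{\beta Q^{0}}|c\tens x_2$ in the proof of \Cref{main}. The actual mechanism, which your ``tracking the face maps'' must make explicit, is a cancellation internal to the operation part: for $j\geq \tfrac{|x|}{2}$ the simplicial-degree-one class $\overline{\beta^{\epsilon}Q^{j}}(y\tens x)$ is carried by the action face map onto the class $y\tens\overline{\beta^{\epsilon}Q^{j}}(x)$ sitting in simplicial degree zero inside $L$ (these live in $L$ but not in $\g$), and such pairs cancel acyclically; in the remaining range $\tfrac{|x|-|y|}{2}\leq j<\tfrac{|x|}{2}$ the action face map vanishes by instability, so the connecting homomorphism is zero and the extension splits over $\F$. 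Finally, the formula $\overline{\beta^{\epsilon}Q^{j}}(y\tens x)=y\tens\overline{\beta^{\epsilon}Q^{j}}(x)$ that this computation uses is only quoted in the paper under the hypothesis that $H^*(M^+;\F)$ carries no nontrivial Steenrod operations (\cite[Proposition 4.4]{zhang}); that suffices for $T$ and $\Sigma_{g,1}$, but for the proposition as stated for an arbitrary parallelizable $M$ you would need the general form of that action, or an argument that the correction terms do not change the answer.
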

\begin{remark}\cite[Remark 6.6]{zhang}\label{triplebracketp=3}
     When $p=3$, there has to be an identity $\overline{\beta^{\epsilon} Q^{j}}(x)=[[x,x],x]$ when $x$ is a class of degree $2j$ in the mod 3 homology of a spectral Lie algebra $L$. This can be seen by computing the weight 3 part of the spectral sequence (\ref{oddprimarysseq}) for $M=\mathbb{R}^n$, $X=\mathbb{S}^{2j}$ and comparing with the weight 3 part of the free $\mathbb{E}_n$-algebra on the generator $x$ given in \cite[III]{CLM}. 
     
     In other words, the mod 3 homology of a spectral Lie algebra should have the structure of an \textit{operadic} $\Lie_{\mathbb{F}_3}$-algebra, denoted by $\mathrm{Lie}^{s,\mathrm{op}}_{\mathbb{F}_3}$, which does not require $[[x,x],x]=0$. The underlying module of $\Lie_{\R}(M)$ is thus given as follows:  let $B$ be an $\mathbb{F}_3$-basis for the free $\mathrm{Lie}^{s,\mathrm{op}}_{\mathbb{F}_3}$-algebra on $M$. The graded $\mathbb{F}_3$-module $\Lie_{\R}(M)$ has basis the quotient of
$$\{\overline{ \beta_1^{\epsilon_1} Q^{j_1}}\cdots \overline{ \beta_k^{\epsilon_k} Q^{j_k}}| x,\ \  x\in B,  j_k\geq \frac{|x|}{2}, j_i\geq 3j_{i+1}-\epsilon_{i+1}\forall i\}$$ by the relation $\overline{\beta^{\epsilon} Q^{j}}(x)=[[x,x],x]$ for all $x\in M$ with even degree.
\end{remark}

\section{Mod $p$ homology of $B_k(T)$}\label{section3}
Let $M=\Sigma_g$ be a genus $g$ surface with $g\geq 1$. Its integral cohomology ring is
$$H^*(\Sigma_g;\Z) = \left\{ \begin{array}{lcl}
\Z\{d\} & *=0\\
 \Z\{a_i\oplus b_i,\ i=1,\ldots,g\} &  *=1 \\
\Z\{c\} & *=2\\
0&\mbox{otherwise}
\end{array}\right.$$
with cup product given by $a_i\cup b_i=c$ for all $i$, $d\cup y =y$ for all $y\in H^*(\Sigma_g;\Z)$, and zero otherwise. Hence $H^*(\Sigma_{g,1}^+;\mathbb{Z})$ has product structure given by $a_i\cup b_i=c$ for all $i$ and zero otherwise, where $\Sigma_{g,1}$ is the punctured genus $g$ surface.

Using the fact that Knudsen's spectral sequence with rational coefficients always collapses on the $E^2$-page \cite{ben}, Drummond-Cole and Knudsen  produced explicit formulae for the Betti numbers of $B_k(\Sigma_g)$ and $B_k(\Sigma_{g,1})$ for all $k$ and $g$.
\begin{notation}\label{Qbasis}
Denote by $\g$ the $\Lie_{\F}$-algebra $$\g= H^*(\Sigma_g;\F)\tens \Lie_{\F} (\F\{x_2\})$$ with $x_2$ in internal degree 2 and weight 1, with the $\Lie_{\F}$-structure  is given by $[y\tens x,  y'\tens x']=(y\cup y')\tens[x,x'].$ An $\F$-basis for $\g$ is $B=\{y\tens x_2, y \tens [x_2,x_2],\  y=a_1,b_1,\ldots,a_g,b_g,c,d\}$.

Let $\g'$ be $\Lie_{\mathbb{Q}}$-algebra $$H^*(\Sigma_g; \mathbb Q)\otimes \free^{\Lie_{\mathbb{Q}}}(\mathbb Q\{x_2\})$$ with brackets given by the same formula as above.
    A $\mathbb{Q}$-basis for $\mathfrak g'$  is also $B$.
\end{notation}

\begin{notation}\label{puncturedQbasis}
Denote by $\g_1$ the $\Lie_{\F}$-algebra $$\g_1= \widetilde H^*(\Sigma_{g,1}^+;\F)\tens \Lie_{\F} (\F\{x_2\})$$ with $x_2$ in internal degree 2 and weight 1, with the $\Lie_{\F}$-structure  is given by $[y\tens x,  y'\tens x']=(y\cup y')\tens[x,x'].$ An $\F$-basis for $\g$ is $B_1=\{y\tens x_2, y \tens [x_2,x_2],\  y=a_1,b_1,\ldots,a_g,b_g,c\}$.

Let $\g'_1$ be $\Lie_{\mathbb{Q}}$-algebra $$\widetilde H^*(\Sigma_g^+; \mathbb Q)\otimes \free^{\Lie_{\mathbb{Q}}}(\mathbb Q\{x_2\})$$ with brackets given by the same formula as above.
    A $\mathbb{Q}$-basis for $\mathfrak g'_1$  is also $B_1$.
\end{notation}

\begin{theorem}\cite{knudsen}\label{dck}
    The $i$th Betti number of $B_k(\Sigma_g)$ and $B_k(\Sigma_{g,1})$  are respectively equal to the dimension over $\mathbb{Q}$ of $\bigoplus_{s+t=i}H_{s,t}\big(\mathrm{wt}_k(\CE(\mathfrak g';\mathbb{Q}))\big)$ and $\bigoplus_{s+t=i}H_{s,t}\big(\mathrm{wt}_k(\CE(\mathfrak g'_1;\mathbb{Q}))\big)$ for all $i$. 
\end{theorem}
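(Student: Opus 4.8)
The plan is to obtain this as the rational shadow of the framework assembled in \Cref{section2}, which is how Knudsen derives it in \cite{knudsen}. First I would rationalize the equivalence of \Cref{knudsen} with $X=\Sp$ and $n=2$. Over $\mathbb{Q}$ the spectral Lie operad $s\lie$ is formal, with homotopy the shifted Lie operad $\Lie_{\mathbb{Q}}$, so $\free^{s\lie}(\Sigma^{2}\Sp)\tens\mathbb{Q}\simeq\free^{\Lie_{\mathbb{Q}}}(\mathbb{Q}\{x_2\})$ with $x_2$ in internal degree $2$ and weight $1$. Since $M^{+}$ is a finite complex, rationally the cotensor $(-)^{M^{+}}$ is computed by $\widetilde{H}^{*}(M^{+};\mathbb{Q})\tens(-)$, with $\Lie_{\mathbb{Q}}$-structure combining cup product with bracket exactly as in \Cref{Qbasis} and \Cref{puncturedQbasis}. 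Hence the right-hand side of \eqref{barconstruction}, after tensoring with $\mathbb{Q}$, becomes $\mid\B(\id,\Lie_{\mathbb{Q}},\g')\mid$ when $M=\Sigma_g$ and $\mid\B(\id,\Lie_{\mathbb{Q}},\g'_1)\mid$ when $M=\Sigma_{g,1}$.

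Next I would identify the homotopy of these realizations. By the theorem of May and Priddy recalled in \Cref{section2}, $\pi_{*,*}\big(\mid\B(\id,\Lie_{\mathbb{Q}},\g')\mid\oplus\mathbb{Q}\big)\cong H^{\Lie_{\mathbb{Q}}}_{*,*}(\g')\cong H_{*,*}(\CE(\g';\mathbb{Q}))$, where the extra summand $\mathbb{Q}$ sits in weight $0$ and so accounts only for the $k=0$ term. Because the Chevalley--Eilenberg differential preserves weights, the weight $k$ part of the right-hand side is $H_{*,*}\big(\wt_k(\CE(\g';\mathbb{Q}))\big)$, and similarly with $\g'_1$ for the punctured surface. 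Combining with the previous step, $\bigoplus_{k\geq 1}H_{*}(B_k(\Sigma_g);\mathbb{Q})$ is the abutment of Knudsen's bar spectral sequence with rational coefficients --- the analogue of \eqref{oddprimarysseq} --- whose weight $k$ summand has $E^2$-page $\wt_k H_{s,t}(\CE(\g';\mathbb{Q}))$, and likewise with $\g'_1$ for $B_k(\Sigma_{g,1})$.

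Finally I would invoke the collapse of the rational bar spectral sequence, which Knudsen proves in \cite{ben}: rationally the bar construction is formal, so there is no room for higher differentials and the sequence degenerates at $E^2$. Reading off $\mathbb{Q}$-dimensions in each total degree $i=s+t$ then gives $\dim_{\mathbb{Q}}H_i(B_k(\Sigma_g);\mathbb{Q})=\sum_{s+t=i}\dim_{\mathbb{Q}}H_{s,t}\big(\wt_k(\CE(\g';\mathbb{Q}))\big)$, which is the asserted formula for the $i$th Betti number of $B_k(\Sigma_g)$, and the analogous equality with $\g'_1$ for $B_k(\Sigma_{g,1})$. The one point that requires care is that the closed surfaces $\Sigma_g$ with $g>1$ are not parallelizable, so \Cref{knudsen} does not apply verbatim; there one uses the more general twisted form of Knudsen's equivalence from \cite{ben}, noting that for an oriented surface the twist is recorded, through Poincar\'e duality, by the copy of $H^{*}(\Sigma_g;\mathbb{Q})$ already built into $\g'$ --- equivalently, one quotes Knudsen's surface formula \cite{knudsen} directly, which is the route we take.
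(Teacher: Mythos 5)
This statement is quoted from \cite{knudsen}, and the paper supplies no proof of its own, so the relevant comparison is with Knudsen's original derivation, which your reconstruction follows faithfully: rationalize the bar/factorization-homology description, identify the rational $E^2$-page with the Chevalley--Eilenberg homology of $\g'$ (resp.\ $\g'_1$) via the May--Priddy identification, and invoke the collapse of the rational spectral sequence from \cite{ben}, exactly the route the paper itself gestures at before \Cref{dck}. Your caveat about the closed surfaces $\Sigma_g$ with $g\neq 1$ not being parallelizable is the right point to flag, and resolving it by the twisted form of Knudsen's equivalence (the twist being trivial for oriented surfaces) or by quoting Knudsen's surface formula directly is exactly how the cited source handles it, so the proposal is correct and essentially the same argument.
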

Explicit formulae for the Betti numbers $\beta_i(B_k(T))$ were obtained by  Drummond-Cole and Knudsen in \cite[Corollary 4.5-4.7]{dck} and $\beta_i(B_k(\Sigma_{g,1}))$  in \cite[Proposition 3.5]{dck}.

We will deduce the higher differentials in the spectral sequence
\begin{equation}\label{genusgsseq}
    E^2_{s,t}(k)=\pi_s\pi_t\big( \B\big(\mathrm{id}, s\lie, \free^{s\lie}(\Sigma^{n}\mathbb{S}) ^{M^+}\big)\tens \mathbb{F}_p \big)(k)\Rightarrow H_{s+t}(B_k(M);\mathbb{F}_p)
\end{equation}
where $M=T$ or $\Sigma_{g,1}$ by combining \Cref{oddE2} with \Cref{dck}.

\begin{theorem}[\Cref{main}]
    For $k\leq p$, the dimension of $H_i(B_k(T);\F)$ over $\F$ is equal to the Betti number $\beta_i(B_k(T))$ for all $i$. Hence the integral homology of $B_k(T)$ has no $p$-power torsion for $k\leq p$. 
\end{theorem}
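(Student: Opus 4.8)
The plan is to compare the mod $p$ Knudsen spectral sequence (\ref{genusgsseq}) for $M=T$ with the rational one from \Cref{dck}, term by term in each weight $k\leq p$. First I would record the two inputs side by side. On the rational side, \Cref{dck} says $\beta_i(B_k(T))=\dim_{\mathbb{Q}}\bigoplus_{s+t=i}H_{s,t}(\wt_k(\CE(\g';\mathbb{Q})))$, where $\g'=H^*(T;\mathbb{Q})\tens\free^{\Lie_{\mathbb{Q}}}(\mathbb{Q}\{x_2\})$. On the mod $p$ side, \Cref{oddE2} identifies the $E^2$-page: for $k<p$ it is $\wt_k H_{s,t}(\CE(\g;\F))$, and for $k=p$ (with $p\geq 5$; the case $p=3$ handled via \Cref{triplebracketp=3}) it is $\wt_p H_{*,*}(\CE(\g;\F))$ plus an explicit extra summand indexed by the operations $\overline{\beta^{\epsilon}Q^j}$ with $\tfrac{|x|-|y|}{2}\leq j<\tfrac{|x|}{2}$, with $\g=H^*(T;\F)\tens\Lie_{\F}(\F\{x_2\})$.

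The first key step is the observation that for the torus, $\g$ and $\g'$ have \emph{identical} underlying modules with bases indexed by the same set $B$ (see \Cref{Qbasis}), because over $\mathbb{Q}$ or $\F$ with $p$ odd the free shifted Lie algebra on a single even class $x_2$ is just $\F\{x_2,[x_2,x_2]\}$ (higher brackets vanish: $[x_2,[x_2,x_2]]=0$ by Jacobi when $p>3$, and by the imposed relation when $p=3$, and likewise $\free^{\Lie_{\F}}=\Lie_{\F}$ agree in the relevant range since $k\leq p$). Crucially the Chevalley--Eilenberg differential of \Cref{CE} is defined by the same universal formula over both fields, and the only denominators appearing are the factor $\tfrac12$ in the $[x_i,x_i]$ term, which is invertible mod $p$ for $p$ odd. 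Hence $\CE(\g;\F)\cong\CE(\g';\mathbb{Q})\tens_{\mathbb{Q}}\F$ as weighted bicomplexes after choosing compatible integral bases, so by universal coefficients $\dim_{\F}\wt_k H_{s,t}(\CE(\g;\F))\geq \dim_{\mathbb{Q}}\wt_k H_{s,t}(\CE(\g';\mathbb{Q}))$ in each bidegree, with equality in total dimension iff the integral CE homology in weight $k$ has no $p$-torsion in adjacent degrees. The second step is a clean dimension-counting inequality: since the mod $p$ spectral sequence converges to $H_{s+t}(B_k(T);\F)$, we always have $\dim_{\F}H_i(B_k(T);\F)\leq \sum_{s+t=i}\dim_{\F}E^2_{s,t}(k)$; and by universal coefficients over $\Z$, $\dim_{\F}H_i(B_k(T);\F)\geq \beta_i(B_k(T))$. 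Chaining these gives
$$\beta_i(B_k(T))\;\leq\;\dim_{\F}H_i(B_k(T);\F)\;\leq\;\sum_{s+t=i}\dim_{\F}E^2_{s,t}(k).$$
To force all inequalities to be equalities it suffices to show the outer two quantities agree after summing over $i$, equivalently that the mod $p$ $E^2$-page has the same total dimension as the rational $E^2$-page.

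For $k<p$ this total-dimension comparison will follow once I check the integral CE homology of $\g$ in weight $k$ is torsion-free — which I would extract from the explicit formulae of Drummond--Cole--Knudsen \cite[Cor.\ 4.5--4.7]{dck} together with the fact that the rational $E^2$-page already collapses, so the rational Betti numbers \emph{are} $\dim_{\mathbb{Q}}\wt_k H_{*,*}(\CE(\g';\mathbb{Q}))$, forcing no higher differentials and hence (by the dimension chain above, run in each weight) $\dim_{\F}E^2_{s,t}(k)=\dim_{\mathbb{Q}}H_{s,t}(\wt_k\CE(\g';\mathbb{Q}))$ exactly when there is no $p$-torsion. For $k=p$ I must additionally account for the extra summand $\bigoplus_{y,x}\F\{\overline{\beta^{\epsilon}Q^j}|y\tens x\}$ appearing in \Cref{oddE2}(2): the point is to show these classes do \emph{not} survive — they must support (or be hit by) higher differentials in (\ref{genusgsseq}) — and that after cancelling them the remaining $E^{\infty}$-page has total dimension exactly $\sum_i\beta_i(B_p(T))$. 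I expect this last point to be the main obstacle: one needs to produce the higher differentials on (or into) the $\overline{\beta^{\epsilon}Q^j}$-classes explicitly, or at least argue by a counting/parity constraint that they cannot all survive, using that the abutment $\bigoplus_i H_i(B_p(T);\F)$ has total dimension $\geq\sum_i\beta_i$ but the naive $E^2$ total dimension exceeds it by exactly the number of such extra classes — so convergence plus the rational lower bound pins down that every extra class dies and nothing else does. The analogous argument for $M=\Sigma_{g,1}$ is identical with $\g,\g'$ replaced by $\g_1,\g_1'$ from \Cref{puncturedQbasis}, recovering \Cref{bhktheorem}.
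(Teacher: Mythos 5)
Your overall strategy (sandwich $\dim_{\F}H_i$ between the rational lower bound $\beta_i$ from universal coefficients and the $E^2$ upper bound, then match total dimensions against the rational $\CE$ computation of \Cref{dck}) is the same as the paper's, and your $k<p$ discussion is essentially right, though you leave the key comparison $\dim_{\F}\wt_k H_{*,*}(\CE(\g;\F))=\dim_{\mathbb{Q}}\wt_k H_{*,*}(\CE(\g';\mathbb{Q}))$ hanging on an unverified ``no $p$-torsion'' condition rather than arguing it (the paper does so by noting the two complexes have identical bases and compatible differentials in weight $\leq p$, the only delicate point being that $\gamma_p(y\tens x_2)$ receives no differentials).

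The genuine gap is at weight $k=p$. Your concluding claim --- that the $E^2$ total dimension exceeds $\sum_i\beta_i$ by exactly the number of extra classes $\overline{\beta^{\epsilon}Q^j}|c\tens x_2$, so ``every extra class dies and nothing else does'' --- is arithmetically impossible: differentials kill classes in source--target pairs. Here the excess is $2$ (the classes $\overline{Q^0}|c\tens x_2$ and $\overline{\beta Q^{0}}|c\tens x_2$), so exactly one differential occurs, and the two extra classes cannot cancel each other for bidegree reasons. If both extra classes were wiped out by differentials involving the $\CE$ part, additional $\CE$ classes would die and $E^{\infty}$ would drop below $\sum_i\beta_i$, contradicting your own lower bound. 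The correct mechanism, which your argument cannot see, is degree-based: $\overline{\beta Q^{0}}|c\tens x_2\in E^2_{1,-2}$ has total degree $-1$, while the abutment $H_*(B_p(T);\F)$ vanishes in negative degrees, so this class \emph{must} be hit; the only class of total degree $0$ available to hit it is $\gamma_p(c\tens x_2)$ (unique because $\beta_0(B_p(T))=1$), forcing the $d_{p-2}$-differential $\gamma_p(c\tens x_2)\mapsto\overline{\beta Q^{0}}|c\tens x_2$. The other extra class $\overline{Q^0}|c\tens x_2$ \emph{survives} and exactly compensates in degree $0$ for the disappearance of $\gamma_p(c\tens x_2)$; after this single cancellation the counting inequality closes the argument. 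You would also need a separate treatment of $p=3$: there \Cref{triplebracketp=3} imposes the relation $\overline{\beta Q^{0}}(x)=[[x,x],x]$, which changes the $E^2$-page itself (the cancellation of $\gamma_3(c\tens x_2)$ against $\overline{\beta Q^{0}}|c\tens x_2$ already happens via a $d_1$), so the $p\geq 5$ bookkeeping does not apply verbatim; your proposal only gestures at this case.
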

\begin{proof}
 The isomorphism between the $\F$-basis of $\g$ and the $\mathbb{Q}$-basis of $\g'$ (cf. \Cref{Qbasis}) induces compatible isomorphisms between the basis $$\big\{\gamma_{k_1}(u_1) \cdots \gamma_{k_m}(u_m)\langle v_1,  \dots, v_n\rangle,\ u_1,\ldots,u_m,v_1,\ldots v_m\in B\big\}$$ on each simplicial level of $\CE(\g;\F)$ and $\CE(\g';\mathbb{Q})$. Since the integral cohomology of $T$ is torsion-free and $\gamma_p(y\tens x_2)$ in weight $p$ does not receive differentials,  the differentials in the two $\CE$ complexes preserve the isomorphism in weight $k\leq p$. Furthermore, when $p=3$ the simplicial objects $\B(\id, \Lie_{\mathbb{F}_3},\g)$ and $\B(\id, \mathrm{Lie}^{s,\mathrm{op}}_{\mathbb{F}_3},\g)$ are isomorphic in weight $k<p$. Hence
 \begin{equation}\label{CEsamedim}
     \dim_{\F}\bigoplus_{s+t=i}\wt_k H_{s,t}(\CE(\g;\F))=\dim_{\mathbb{Q}}\bigoplus_{s+t=i}\wt_kH_{s,t}(\CE(\g';\mathbb{Q}))=\beta_i(B_k(T))
 \end{equation}
for all $i$ and $k\leq p$, with the right equality given  by \Cref{dck}. 
 
 By \Cref{oddE2}.(1), the $E^2$-page of the weight $k<p$ part of the spectral sequence (\ref{genusgsseq}) is  $$E^2_{s,t}(k)\cong \wt_k H_{s,t}(\CE(\g;\F)).$$ Since $B_k(T)$ is of finite type, the dimension of $H_i(B_k(T);\F)$ over $\F$ is at least $\beta_i(B_k(T))$ for all $i$. Therefore no higher differential can happen by \Cref{CEsamedim}. 
 
Now we tackle the case $k=p$.
First we consider $p>3$.
By \Cref{oddE2}.(2),  the $E^2$-page of the weight $k$ part of the spectral sequence (\ref{genusgsseq}) is given by $$E^2_{*,*}(p)\cong\wt_p H_{*,*}(\CE(\g;\F))\oplus\F\{\overline{Q^0}|c\tens x_2, \overline{\beta Q^{0}}|c\tens x_2\}.$$ Note that the class $\overline{\beta Q^{0}}|c\tens x_2\in E^2_{1,-2}$ has total degree $-1$, so it has to killed by a class with total degree $0$ and simplicial degree at least 3. There is exactly one class of total degree $0$ in $\wt_p H_{*,*}(\CE(\g;\F))$ since $\beta_0(B_p(T))=1$, which is the class $\gamma_p(c\tens x_2)\in E^2_{p-1, 1-p}$. Therefore there has to be a $d_{p-2}$-differential $\gamma_p (c \tens x_2)\mapsto \overline{\beta Q^{0}}|c\tens x_2.$
Appealing  to the inequality $$\sum_i\dim_{\F} H_i(B_k(T);\F)\geq\sum_i\beta_i(B_k(T))\stackrel{(\ref{CEsamedim})}{=}\dim_{\F}\bigoplus_{s,t}\wt_p H_{s,t}(\CE(\g;\F))=\sum_{s,t}\dim_{\F}E^2_{s,t}(p)-2,$$ we deduce that no other higher differential can happen.

For $p=3$,  we deduce from \Cref{triplebracketp=3} that the normalized complexes of $\B(\id, \Lie_{\mathbb{F}_3},\g)$ and $\B(\id, \mathrm{Lie}^{s,\mathrm{op}}_{\mathbb{F}_3},\g)$ differ in the weight 3 part in that there is a differential $\gamma_3(c \tens x_2)\mapsto \overline{\beta Q^{0}}|c\tens x_2$ in the latter but not in the former. Hence in the spectral sequence (\ref{genusgsseq}), the element $$\gamma_3(c\tens x_2)=[[c\tens x_2,c\tens x_2],c\tens x_2]\in \Lie_{\R}\circ\Lie_{\R}\Big(H^*(T;\mathbb{F}_3)\tens \Lie_{\R} (\mathbb{F}_3\{x_2\})\Big)$$ with the two brackets come from different iterations of $\Lie_{\R}$ is mapped by the $d_1$-differential to $$[[c\tens x_2,c\tens x_2],c\tens x_2]\in \Lie_{\R}\Big(H^*(T;\mathbb{F}_3)\tens \Lie_{\R} (\mathbb{F}_3\{x_2\})\Big)$$ with both brackets coming from the same iteration of $\Lie_{\R}$. At weight 3, there are no more differentials between elements containing a unary operation and brackets. It follows that the $E^2$-page of the weight 3 part of the spectral sequence (\ref{genusgsseq}) has a basis given by the union of an $\mathbb{F}_3$-basis of $\wt_3 H_{*,*}(\CE(\g;\mathbb{F}_3))/\mathbb{F}_3\{\gamma_3(c\tens x_2)\}$ and $\{\overline{Q^0}|c\tens x_2\}$. Combining with \Cref{CEsamedim}, we see that $$\sum_i\beta_i(B_3(T))=\dim_{\mathbb{F}_3}\bigoplus_{s,t}\wt_3 H_{s,t}(\CE(\g;\mathbb{F}_3))=\sum_{s,t}\dim_{\mathbb{F}_3}E^2_{s,t}(3)\geq \sum_i\dim_{\mathbb{F}_3} H_i(B_3(T);\mathbb{F}_3).$$ Hence no higher differential can happen and equality is achieved.

Therefore the dimensions of $H_*(B_k(T);\F)$ over $\F$ agree with the Betti numbers for $k\leq p$. Since $B_k(T)$ is a finite complex, we further deduce that its integral homology has no $p$-power torsion for $k\leq p$.
\end{proof}
\begin{remark}
   The exact same argument works for the punctured genus $g$ surface by comparing the $\Lie$-algebra homology groups of $\g_1$ and $\g'_1$ (\Cref{puncturedQbasis})  weight $k\leq p$, thereby providing an elementary proof for \Cref{bhktheorem}.
\end{remark}

\begin{remark}
The $d_{p-2}$-differential $\gamma_p (c \tens x_2)\mapsto \overline{\beta Q^{0}}|c\tens x_2$ should be viewed as a universal differential, in the sense that it occurs  in  the universal case $M=\lim_{n\rightarrow\infty}\mathbb{R}^n$, cf.  \cite[Proposition 6.6]{zhang}. Then $\lim_{n\rightarrow\infty}\Omega^n\free^{s\lie}(\Sigma^{n}X)\simeq X$ and the spectral sequence (\ref{oddprimarysseq}) becomes $$E^2_{s,t}=\pi_s\pi_t \B(\mathrm{id}, s\lie, \mathbb{S}^r\tens \mathbb{F}_p )\Rightarrow \pi_{s+t}|\B(\id,s\lie,\mathbb{S}^r\tens \mathbb{F}_p)|\cong \pi_{s,t}(\free^{\mathbb{E}^{\mathrm{nu}}_\infty\tens \F}(\mathbb{S}^r)),$$ where $\mathbb{E}^{\mathrm{nu}}_\infty\tens \F$ is the non-unital $\mathbb{E}_\infty$-operad in the category of $\F$-module spectra. Heuristically, this is because the bottom non-vanishing mod $p$ Dyer-Lashof operation on a class $x$ of degree $2j$ in an $\mathbb{E}^{\mathrm{nu}}_\infty$-$\F$-algebra is given by $\Q^j(x)=x^{\tens p}$, so $\gamma_p(x)$ is redundant.
\end{remark}
\begin{remark}
    We expect the first $p$-power-torsion classes in $H_*(B_k(M);\Z)$ for $M=T, \Sigma_{g,1}$ to show up at weight $k=2p$ in the form of $\overline{\beta^\epsilon Q^1}|c\tens [x_2,x_2]\in H_*(B_{2p}(M);\F)$ with $\epsilon=0,1$.
\end{remark}

\end{document}